\documentclass[12pt]{article}

\usepackage{amsmath}
\usepackage{amssymb}
\usepackage{enumerate}

\allowdisplaybreaks
\newtheorem{theorem}{Theorem}

\newtheorem{corollary}[theorem]{Corollary}

\newtheorem{definition}[theorem]{Definition}

\newtheorem{lemma}[theorem]{Lemma}

\newtheorem{proposition}[theorem]{Proposition}

\newenvironment{proof}[1][Proof]{\noindent\textbf{#1.} }{\ \rule{0.5em}{0.5em}}
\numberwithin{equation}{section} 
\numberwithin{theorem}{section}

\begin{document}

\title{\bf{Solution of the Dirichlet problem for the equation $a\Delta u+b\cdot
\nabla u=0$ by the Monte Carlo method}}

\author{\bf{Jos\'{e} Villa-Morales}\\
Departamento de Matem\'{a}ticas y F\'{\i}sica,\\
Universidad Aut\'onoma de Aguascalientes,\\
Av. Universidad 940, C.P. 20131,\\
Aguascalientes, Ags., Mexico.\\
\texttt{jvilla@correo.uaa.mx}}
\date{ }
\maketitle

\begin{abstract}
In this paper we study the Dirichlet problem corresponding to an open
bounded set $D\subset \mathbb{R}^{d}$ and the operator
\begin{equation*}
A=\sum_{i=1}^{d}a\frac{\partial ^{2}}{\partial x_{i}^{2}}
+\sum_{i=1}^{d}b_{i}\frac{\partial }{\partial x_{i}},
\end{equation*}
where $a>0$ and $b\in \mathbb{R}^{d}$. We define a mean value property
and prove that a function $u$ has such property in $D$ if and only if $Au=0$
in $D$. Using this characterization, and a drifted Brownian motion, we define a family of random variables that converges almost surely and the limit is used to give an explicit representation for the solutions to the Dirichlet problem, this immediately implies the uniqueness. On the other hand, the existence of the solution  is proved imposing a regular condition on the boundary of $D$.
\end{abstract}

\hspace{-0.8cm} \textbf{Keywords.} Dirichlet problem, mean value property, harmonic functions, von Mises-Fisher distribution.\\ 
\textbf{Mathematics Subject Classification (2010).} Primary 35K20, 60G42; Secondary 60J65, 60J75.

\section{Introduction}

As usual, by $(\mathbb{R}^{d},||\cdot ||)$ we are going to denote the
Euclidean norm space. For $G\subset \mathbb{R}^{d}$ we denote by $\overline{G}$ and $\partial G$ the closure and boundary (or frontier) of $G$, respectively.

Given a non-empty, bounded, and open subset $D$ of $\mathbb{R}^{d}$ and a
continuous function $f:\partial D\rightarrow \mathbb{R}$, we are interested
in find a unique continuous function $u:\overline{D}\rightarrow \mathbb{R}$
such that 
\begin{equation*}
u(x)=f(x),\ \ \forall x\in \partial D,
\end{equation*}
and moreover the function $u$ should have second partial derivatives on $D$
which satisfies the partial differential equation
\begin{equation}
\sum_{i=1}^{d}a\frac{\partial ^{2}f}{\partial x_{i}^{2}}(x)+%
\sum_{i=1}^{d}b_{i}\frac{\partial f}{\partial x_{i}}(x)=0,\ \ \forall x\in D,
\label{eqdp}
\end{equation}%
where $a>0$ and $(b_{1},...,b_{d})\in \mathbb{R}^{d}\backslash \{0\}$.

Such question is knwon in the literature as the Dirichlet problem. It has a
long history in pure and applied mathematics (see \cite{Ke}, \cite{Ka}, \cite{G-T}),
and there are a variety of ways to solve such problem. For example,
it can be solved by functional analysis techniques (see \cite{Fried}) or using complex
analysis (see \cite{Ahlfors}).

Here we are interested in solve the Dirichlet problem by probabilistic
techniques. The interplay between partial differential equations and
probability theory is an old subject and it was initiated by Kakutani \cite{Ka}. On
the other hand, Metropolis and Ulam introduced a statistical sampling
technique, called the Monte Carlo method, for solving physical problems.
Such method is very helpful and there are many studies of the Dirichlet
problem using Monte Carlo techniques (see, for example,  \cite{N-O}
and the references there in).

In the solution of the Dirichlet problem that we are going to present here is necessary a family of random variables. Next we describe how such family is
constructed. But before to do it we introduce some notation. By $d(x,G)$ we
design the distance from the point $x\in \mathbb{R}^{d}$ to the set $GA\subset \mathbb{R}^{d}$, to be precise
\begin{equation*}
d(x,G)=\inf \{||x-y||:y\in G\}.
\end{equation*}%
Let $B_{r}(x)=\{z\in \mathbb{R}^{d}:||x-z||<r\}$ be the open ball of radius 
$r>0$ centered at $x\in \mathbb{R}^{d}$, and $\partial B_{r}(x)=\{z\in 
\mathbb{R}^{d}:||x-z||=r\}$.

Let $W^{x}$ be a Brownian motion, defined on a probability space $(\Omega,\mathcal{F},\mathbb{P})$, that starts at $x\in \mathbb{R}^{d}$ and
consider the stochastic process $X^{x}$ defined as%
\begin{equation*}
X_{t}^{x}=tb+aW_{t}^{x/a},\ \ t\geq 0,
\end{equation*}%
where $a>0$, $b=(b_{1},...,b_{d})$. The stochastic process $X^{x}$ is continuous
and has the strong Markov property inherited from $W^{x}$. The strong Markov property of $X$ will be used frequently hereinafter, and it intuitively means that we can begins afresh the stochastic
process $X$ at stopping times.

Given $x\in D$ we would like to find the corresponding value $u(x)$. To this
end we take $0<\varsigma \leq 1$, arbitrary and fix, and construct the
sequence $(Y_{\varsigma }^{x}(n))_{n}$ as follows: $Y_{\varsigma }^{x}(1)=x$%
, let us run the process $X$ starting at $Y_{\varsigma }^{x}(1)$ and we stopped the
first time it exits the ball $B_{\varsigma d(Y_{\varsigma }^{x}(1),\partial
D)}(Y_{\varsigma }^{x}(1))$, then $Y_{\varsigma
}^{x}(2)$ is defined as the place where the stochastic process $%
X^{Y_{\varsigma }^{x}(1)}$ exits the ball $B_{\varsigma d(Y_{\varsigma
}^{x}(1),\partial D)}(Y_{\varsigma }^{x}(1))$, we restart again the process $%
X$ starting now at the point $Y_{\varsigma }^{x}(2)$, then we define $%
Y_{\varsigma }^{x}(3)$ as the place where the stochastic process $%
X^{Y_{\varsigma }^{x}(2)}$ exits the ball $B_{\varsigma d(Y_{\varsigma
}^{x}(2),\partial D)}(Y_{\varsigma }^{x}(2))$. Proceeding in this way we
obtain the desired sequence. We will prove in Lemma \ref{Lcs} that the
sequence $(Y_{\varsigma }^{x}(n))_{n}$ converges a.s. to a point $%
Y_{\varsigma }^{x}(\infty )\in \partial D$. This allow us to define the function $u(x)=f(Y_{\varsigma }^{x}(\infty))$, $x\in D$.

Muller in \cite{Muller} introduced this sequence taken $\varsigma =1$, when $a=1$
and $b=0$. Muller also proved the convergence of the sequence $(Y_{1}^{x}(n))_{n}$
based on the fact that such sequence has the same distribution as that of a
certain sequence which depends on the Brownian paths starting at $x$. It was
observed in \cite{Villa} that the ambiguity of the parameter $\varsigma $ is
the key to shows that $u(x)=f(Y_{\varsigma }^{x}(\infty ))$, $x\in D$, has the mean
value property (see (\ref{der}) ahead). In the case $A=\Delta$ it is well known that the mean value property is a useful characterization of harmonicity. Now we are going to introduce the new version of this concepts, since them will play a fundamental role.

By $I_{v}(z)$ we denote the modified Bessel function of the first kind
defined as
\begin{equation*}
I_{v}(z)=\frac{\left( \frac{z}{2}\right) ^{v}}{\pi ^{\frac{1}{2}}\Gamma
\left( v+\frac{1}{2}\right) }\int_{-1}^{1}(1-t^{2})^{v-\frac{1}{2}}e^{\pm
zt}dt,\ \ \forall z,v\in \mathbb{C}\text{ and Re }v>-\frac{1}{2}.
\end{equation*}
$I_{v}(z)$ is real and positive when $v>-1$ and $z>0$, see Section 9.6.1 and
formula 9.6.18 in \cite{A-S}.

\begin{definition}\label{defprinpvm}
Let $D$ be an open set. A function $u:D\rightarrow \mathbb{R}$ has the mean
value property in $D$ if $u$ is locally integrable and for all $x\in D$ and
all $r<d(x,\partial D)$,
\begin{equation}
u(x)=\kappa \left( \frac{r||b||}{2a}\right) \int_{\partial B_{r}(x)}u(y)\exp
\left\{ \frac{1}{2a}b\cdot (y-x)\right\} \mu _{r}(dy), \label{defpvm}
\end{equation}
where 
\begin{equation}
\kappa (z)=\left( \frac{z}{2}\right) ^{\frac{d}{2}-1}\frac{1}{\Gamma \left( 
\frac{d}{2}\right) I_{\frac{d}{2}-1}\left( z\right) },\ \ z>0,  \label{defk}
\end{equation}%
$\mu _{r}(dy)$ is the Lebesgue surface on $\partial B_{r}(x)$, normalized to
have total mass $1$.
\end{definition}

If $u$ has the mean value property, then the Dominated Convergence Theorem
implies%
\begin{eqnarray*}
\lim_{h\rightarrow 0}u(x+h) &=&\kappa \left( \frac{r||b||}{2a}\right)
\lim_{h\rightarrow 0}\exp \left\{ -\frac{1}{2a}b\cdot (x+h)\right\} \\
&&\times \lim_{h\rightarrow 0}\int_{\partial B_{r}(x+h)}u(y)\exp \left\{ 
\frac{1}{2a}b\cdot y\right\} \mu _{r}(dy) \\
&=&u(x).
\end{eqnarray*}
Therefore a function with the mean value property is continuous. As in the case $a=1$ and $b=0$ we have more, as
we will see in Theorem \ref{TeopvmH}, a function $u$ has the mean value
property if and only if it is harmonic, in the following sense.

\begin{definition}\label{defharmo}
Let $D$ be an open set. A function $u:D\rightarrow \mathbb{R}$ is called
harmonic in $D$ if $u$ is of class $C^{2}$ and $Af=0$ in $D$, where%
\begin{eqnarray}
Af(x) &=&a\Delta f(x)+b\cdot \nabla f(x) \nonumber\\ 
&=&\frac{\sigma ^{2}}{2}\sum_{i=1}^{d}\frac{\partial ^{2}f}{\partial
x_{i}^{2}}(x)+\sum_{i=1}^{d}b_{i}\frac{\partial f}{\partial x_{i}}(x),\label{openeds}
\end{eqnarray}
here $\sigma =\sqrt{2a}$ and $b=(b_{1},...,b_{n}).$
\end{definition}

Notice that from (\ref{openeds}) we recognize that $A$ is the infinitesimal generator of the strong Markov process $X$.

The expression $u(x)=f(Y_{\varsigma }^{x}(\infty ))$, $x\in D$, gives the uniqueness of
the Dirichlet problem, in fact we will see in Theorem \ref{ThUniqueness} that any other solution
has this representation. But the study of the existence of the solution is
not so easy. Actually, in the case $a=1$ and $b=0$, Zaremba observed in \cite{Za} that the
Dirichlet problem is not always solvable. In this case we need to impose
some regularity condition on the boundary of $D$.

\begin{definition}\label{regpoint}
Let $D$ be an open set and $\varsigma \in (0,1]$. We say that $v\in \partial
D$ is a regular point for $(D,f)$ if 
\begin{equation*}
\lim_{\substack{ x\rightarrow v  \\ x\in D}}\mathbb{E}\left[ f(Y_{\varsigma
}^{x}(\infty ))\right] =f(v),
\end{equation*}
where $\mathbb{E}$ is the expectation with respect to $\mathbb{P}$.
\end{definition}

In our main result (Theorem \ref{MainTh}) we will see that this regularity
condition ensure the existence of the Dirichlet problem.

It is worth mention that a second steep in this work could be a
computational implementation using the probability distribution of the
random sequence $(Y_{\varsigma }^{x}(n))_{n}$, see the identity (\ref{distrYn}). When $\varsigma=(d(Y_{\varsigma}^{0}(1),\partial D))^{-1}$ the random variable $Y_{\varsigma}^{0}(1)$ has the von Mises-Fisher distribution, see \cite{Gatto}.

The paper is organized as follows. In Section \ref{SecPreli} we prove that a
function has the mean value property if and only if it is harmonic. Using
the Convergence Theorem for discrete martingales we prove, in Section \ref%
{SecMteCarlo}, that the sequence $(Y_{\varsigma }^{x}(n))_{n}$ converges
a.s. Also in this section we prove the uniqueness and existence of the
Dirichlet problem, and as an easy application of the uniqueness we prove a
maximum principle, we also give in this section a criterion to determine when a point is
regular.

\section{Preliminaries\label{SecPreli}}

We begin by recalling that the Lebesgue integral of a function $f$ over $%
B_{r}(0)$ can be written in iterated form as%
\begin{equation}
\int_{B_{r}(0)}f(x)dx=\int_{0}^{r}\int_{\partial B_{s}(0)}f(x)\mu _{s}(dx)ds.
\label{intradl}
\end{equation}

\begin{lemma}
\label{pvmdif}If $u$ has the mean value property in $D$ (see Definition \ref{defprinpvm}), then $u$ is $%
C^{\infty }$ in $D$.
\end{lemma}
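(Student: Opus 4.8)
The plan is to deduce from the mean value property that, in a neighborhood of any fixed point, $u$ coincides with the convolution of $u$ against a fixed smooth, compactly supported kernel; since such a convolution is automatically $C^{\infty}$, so is $u$. Recall that $u$ is already known to be continuous and locally integrable. Fix $x_{0}\in D$ and put $\rho=d(x_{0},\partial D)$. The mechanism is to average the spherical identity (\ref{defpvm}) over a band of radii and to collapse the resulting iterated integral into a single solid integral over a ball, via the polar-coordinate formula (\ref{intradl}).

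Concretely, I would fix $R<\rho/2$ and a smooth weight $p\geq 0$, not identically zero, supported in an annulus $(\varepsilon,R)$ with $0<\varepsilon<R$. Multiplying (\ref{defpvm}) by $p(r)$ and integrating over $r\in(0,R)$ turns the left side into $C\,u(x)$ with $C=\int_{0}^{R}p(r)\,dr>0$. On the right side I would replace the normalized measure $\mu_{r}$ by the unnormalized surface measure on $\partial B_{r}(x)$ (introducing a factor $1/(\omega_{d}r^{d-1})$, where $\omega_{d}r^{d-1}$ is the area of $\partial B_{r}(x)$) and then apply (\ref{intradl}), centered at $x$ by translation, to rewrite the iterated integral as a single integral over $B_{R}(x)$. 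Writing $q(r)=p(r)\kappa\!\left(r||b||/(2a)\right)/(\omega_{d}r^{d-1})$, this produces, for every $x\in B_{\rho/2}(x_{0})$,
\[
u(x)=\frac{1}{C}\int_{\mathbb{R}^{d}}K(y-x)\,u(y)\,dy,\qquad K(z)=q(||z||)\exp\left\{\frac{1}{2a}b\cdot z\right\}.
\]

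Two points must then be checked. First, the displayed identity holds with one and the same $C$ and $K$ for all $x\in B_{\rho/2}(x_{0})$: for such $x$ one has $d(x,\partial D)>\rho/2>R$, so (\ref{defpvm}) is valid at every radius in the support of $p$, and neither $C$ nor $K$ depends on $x$. Second, $K\in C_{c}^{\infty}(\mathbb{R}^{d})$: because $q$ is supported in $(\varepsilon,R)$, the kernel vanishes near the origin (where $z\mapsto ||z||$ fails to be smooth) and is a product of smooth factors elsewhere; here I use that $\kappa$, given by (\ref{defk}), is smooth and strictly positive on $(0,\infty)$, thanks to the positivity of $I_{\frac{d}{2}-1}$ recorded after its definition, which also guarantees $C>0$ and that $q$ is smooth. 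Given this, differentiation under the integral sign is routine, since each $\partial_{x}^{\alpha}K(y-x)$ is continuous, bounded, and supported in a fixed compact set while $u\in L^{1}_{\mathrm{loc}}$; hence $x\mapsto\int K(y-x)u(y)\,dy$ is $C^{\infty}$ on $B_{\rho/2}(x_{0})$, and therefore so is $u$. As $x_{0}\in D$ was arbitrary, $u\in C^{\infty}(D)$.

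The only genuinely delicate step is arranging the radial averaging so that the double spherical average collapses to a bona fide smooth convolution; the main obstacle is smoothness of $K$ at the origin, which is precisely why I would support $p$ (hence $q$) in an annulus bounded away from $0$ rather than using a full solid mollifier. Everything else is bookkeeping with (\ref{intradl}) and the positivity of $\kappa$.
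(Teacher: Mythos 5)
Your proof is correct and follows essentially the same route as the paper: both average the spherical mean value identity (\ref{defpvm}) over radii against a smooth radial weight, use the polar-coordinate formula (\ref{intradl}) to collapse the result into a solid integral, and conclude that $u$ coincides locally with its convolution against a fixed $C_{c}^{\infty}$ kernel of the form (radial factor)$\,\times\exp\{b\cdot z/(2a)\}$. The only difference is cosmetic: the paper supports its weight on a full ball using the bump $\exp\{1/(||z||^{2}-\varepsilon^{2})\}$ (smooth at the origin because it is a function of $||z||^{2}$) and absorbs the factor $\kappa^{-1}$ into the normalizing constant $c(\varepsilon)$, whereas you support the weight in an annulus away from the origin and keep $\kappa$ and the surface-area normalization inside the kernel.
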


\begin{proof}
Since $I_{v}(z)$ is a holomorphic function of $z$ through the $z$-plane,
cut along the negative real axis, we deduce that $\kappa $ is a continuous
function. Moreover, we have (see \cite{A-S}, formula 9.6.7)
\begin{equation*}
I_{v}(z)\sim \left( \frac{z}{2}\right) ^{v}\frac{1}{\Gamma (v+1)},\ \ \text{as }z\rightarrow 0,
\end{equation*}
then $\lim_{z\downarrow 0}\kappa (z)=1$. This implies that for each $\varepsilon >0$ the integral 
\begin{equation*}
\int_{0}^{\varepsilon }\kappa \left( \frac{s||b||}{\sigma ^{2}}\right)
^{-1}\exp \left\{ \frac{1}{s^{2}-\varepsilon ^{2}}\right\} ds
\end{equation*}
is well defined, let us denote its value by $(c(\varepsilon ))^{-1}$. Let us also define the function $h_{\varepsilon }:\mathbb{R}^{d}\rightarrow \lbrack 0,\infty )$, as
\begin{equation*}
h_{\varepsilon }(x)=c(\varepsilon )g_{\varepsilon }(x)\exp \left\{ \frac{1}{\sigma ^{2}}b\cdot x\right\} ,
\end{equation*}
where
\begin{equation*}
g_{\varepsilon }(x)=\left\{ 
\begin{array}{ll}
\exp \left\{ \frac{1}{||x||^{2}-\varepsilon ^{2}}\right\} , & 
||x||<\varepsilon , \\ 
0, & ||x||\geq \varepsilon .
\end{array}
\right.
\end{equation*}
Since $g_{\varepsilon }$ is a $C^{\infty }$ function, then $h_{\varepsilon }$
is $C^{\infty }$ with support $\overline{B_{\varepsilon }(0)}$.

For each $\varepsilon >0$ and $x\in D$ such that $\overline{B_{\varepsilon}(x)}\subset D$, define the convolution
\begin{equation*}
(u\ast h_{\varepsilon })(x)=\int_{\mathbb{R}^{d}}u(y)h_{\varepsilon
}(x-y)dy=\int_{B_{\varepsilon }(0)}u(y+x)h_{\varepsilon }(y)dy.
\end{equation*}
Using (\ref{intradl}) the above expression can be written as, for $0<\varepsilon<1$,
\begin{align*}
&(u\ast h_{\varepsilon })(x)\\ 
&=\int_{0}^{\varepsilon }\int_{\partial B_{s}(0)}u(y+x)h_{\varepsilon}(y)\mu _{s}(dx)ds \\
&=\int_{0}^{\varepsilon }\int_{\partial B_{s}(0)}u(y+x)c(\varepsilon )
\exp \left\{ \frac{1}{\sigma ^{2}}b\cdot y\right\} \exp \left\{ \frac{1}{||y||^{2}-\varepsilon ^{2}}\right\} \mu _{s}(dy)ds \\
&=\int_{0}^{\varepsilon }\exp \left\{ \frac{1}{s^{2}-\varepsilon ^{2}}
\right\} c(\varepsilon )\int_{\partial B_{s}(0)}u(y+x)\exp \left\{ \frac{1}{\sigma ^{2}}b\cdot y\right\} \mu _{s}(dy)ds \\
&=\int_{0}^{\varepsilon }\exp \left\{ \frac{1}{s^{2}-\varepsilon ^{2}}
\right\} c(\varepsilon )\int_{\partial B_{s}(x)}u(y)\exp \left\{ \frac{1}{\sigma ^{2}}b\cdot (y-x)\right\} \mu _{s}(dy)ds,
\end{align*}
and, the mean value property of $u$, (\ref{defpvm}) yields
\begin{eqnarray*}
(u\ast h_{\varepsilon })(x)&=&u(x)c(\varepsilon )\int_{0}^{\varepsilon }\kappa \left( \frac{s||b||}{\sigma ^{2}}\right) ^{-1}\exp \left\{ \frac{1}{s^{2}-\varepsilon ^{2}}
\right\} ds.
\end{eqnarray*}

By the definition of $c(\varepsilon )$ we have $u=u\ast h_{\varepsilon }$.
Inasmuch as $h_{\varepsilon }$ is $C^{\infty }$ then $u=u\ast h_{\varepsilon
}$ is a $C^{\infty }$ function on the open subset of $D$ where it is
defined.\hfill
\end{proof}

\bigskip

Let $\mathbb{P}$ be the Wiener measure on $(\Omega ,\mathcal{F})$, where $\Omega =C[0,\infty )^{d}$ and $\mathcal{F}=\mathcal{B(}C[0,\infty )^{d}\mathcal{)}$. The coordinate proceses $W_{t}(\omega )=\omega (t)$, $\omega\in \Omega $, is the $d$-dimensional Brownian motion on $(\Omega ,\mathcal{F}
,\mathcal{\mathbb{P}})$, starting at $0$. If $x\in \mathbb{R}^{d}$, then $W^{x}=x+W$ will be the $d$-dimensional Brownian motion starting at $x$.

Let us consider the stopping time%
\begin{equation*}
\tau _{B_{r}(x)}^{W}=\inf \{t>0:W^{x}_{t}\notin B_{r}(x)\},
\end{equation*}
which is the first time the process $W^{x}$ exits the ball $B_{r}(x)$.

The $d$-dimensional Brownian motion has many important properties. It is a
martingale and a Markov process, moreover it has the strong Markov property, as we remarked this means that the process begins afresh at stopping times. The translation and
rotational invariance of Brownian motion implies that (see Proposition I.2.8
in \cite{Bass})%
\begin{equation}
\mathbb{P}(x+W_{\tau (B_{r}(x))}\in dy)=\mu _{r}(dy),  \label{disuniB}
\end{equation}
recall that $\mu _{r}(dy)$ is the Lebesgue surface on $\partial B_{r}(x)$,
normalized to have total mass $1$.

In order to find the Laplace transform of the distribution of $\tau^{W}(B_{r}(0))$ let us consider the distance from the Brownian motion $W$ to the origin $0$,
\begin{equation*}
R_{t}=||W_{t}||,\ \ 0\leq t<\infty .
\end{equation*}
This define the process $R$ called Bessel process. If we denote by $\tau _{r}$ the
first hitting time to $r>0$ of the Bessel process, by general theory of
one-dimensional diffusion processes we can evaluate the Laplace transform of
the distribution of $\tau _{r}$\ by solving an eigenvalue problem. In fact,
denoting by $\mathbb{E}$ the expectation with respect to $\mathbb{P}$, we have (see \cite{Kent} or \cite{Gatto})
\begin{equation}
\mathbb{E}\left[ e^{-\lambda \tau _{r}}\right] =\kappa \left( r\sqrt{%
2\lambda }\right) ,  \label{transLB}
\end{equation}%
where $\kappa $ is defined in (\ref{defk}).

In what follows we are going to consider the process $X$, 
\begin{equation}
X_{t}^{x}=x+bt+\sigma W_{t},\ \ t\geq 0.  \label{procX}
\end{equation}
Such process will be the basic stochastic object to deal with the Dirichlet problem for the operator $A$.

\begin{proposition}
\label{pvmExp}A local integrable function $u:D\rightarrow\mathbb{R}$ has the mean value property in $D$ if and if
\begin{equation*}
\mathbb{E}\left[ u\left( X_{\tau ^{X}(B_{r}(x))}^{x}\right) \right] =u(x),
\end{equation*}%
for all $x \in D$, and for all $r<d(x,\partial D)$.
\end{proposition}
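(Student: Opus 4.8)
The plan is to prove the single identity
$$\mathbb{E}\left[u\left(X_{\tau^{X}(B_{r}(x))}^{x}\right)\right]=\kappa\left(\frac{r\|b\|}{\sigma^{2}}\right)\int_{\partial B_{r}(x)}u(y)\exp\left\{\frac{1}{\sigma^{2}}b\cdot(y-x)\right\}\mu_{r}(dy),$$
valid for every locally integrable $u$ and every $r<d(x,\partial D)$. Since $\sigma^{2}=2a$, the right-hand side is exactly the right-hand side of the mean value relation (\ref{defpvm}), so once this identity is in hand both implications of the proposition are immediate: if $u$ has the mean value property the left side equals $u(x)$, and conversely equating the left side to $u(x)$ reproduces (\ref{defpvm}). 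In other words, the whole content is to identify the law of the exit point $X_{\tau^{X}(B_{r}(x))}^{x}$ as the von Mises--Fisher-type measure $\kappa(r\|b\|/\sigma^{2})\exp\{\sigma^{-2}b\cdot(y-x)\}\mu_{r}(dy)$ on $\partial B_{r}(x)$. For the forward direction $u$ is continuous by Lemma \ref{pvmdif}, hence bounded on the compact set $\overline{B_{r}(x)}\subset D$, so all expectations are legitimate; for the converse the stated identity is read as also asserting the integrability of $u(X^{x}_{\tau})$.

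The main tool is a Girsanov change of measure that removes the drift, reducing everything to the driftless Brownian facts (\ref{disuniB}) and (\ref{transLB}) already recorded. Recall $X_{t}^{x}=x+bt+\sigma W_{t}$ with $W$ a $\mathbb{P}$-Brownian motion. Let $Z_{t}=\exp\{-\sigma^{-1}b\cdot W_{t}-\tfrac{1}{2}\sigma^{-2}\|b\|^{2}t\}$, a $\mathbb{P}$-martingale, and define $\mathbb{Q}$ on $\mathcal{F}_{t}$ by $d\mathbb{Q}=Z_{t}\,d\mathbb{P}$. By Girsanov's theorem $\tilde{W}_{t}:=W_{t}+\sigma^{-1}bt$ is a $\mathbb{Q}$-Brownian motion and $X_{t}^{x}=x+\sigma\tilde{W}_{t}$ is, under $\mathbb{Q}$, a driftless (scaled) Brownian motion started at the centre $x$. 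Rewriting $W_{t}$ through $\tilde{W}_{t}$ and using $\sigma\tilde{W}_{t}=X_{t}^{x}-x$, the reciprocal density becomes
$$\frac{d\mathbb{P}}{d\mathbb{Q}}\bigg|_{\mathcal{F}_{t}}=Z_{t}^{-1}=\exp\left\{\frac{1}{\sigma^{2}}b\cdot(X_{t}^{x}-x)-\frac{\|b\|^{2}}{2\sigma^{2}}t\right\}.$$
Writing $\tau=\tau^{X}(B_{r}(x))$, I would apply this at the random time $\tau$: because $X^{x}$ stays in $\overline{B_{r}(x)}$ up to $\tau$, the stopped density $Z_{t\wedge\tau}^{-1}$ is bounded by $\exp\{r\|b\|/\sigma^{2}\}$, so the stopped martingale is uniformly integrable and optional stopping yields
$$\mathbb{E}\left[u(X_{\tau}^{x})\right]=\mathbb{E}_{\mathbb{Q}}\left[u(X_{\tau}^{x})\exp\left\{\frac{1}{\sigma^{2}}b\cdot(X_{\tau}^{x}-x)-\frac{\|b\|^{2}}{2\sigma^{2}}\tau\right\}\right].$$

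It remains to evaluate this $\mathbb{Q}$-expectation, where the exponential weight splits into a part depending only on the exit location $X_{\tau}^{x}\in\partial B_{r}(x)$ and the factor $\exp\{-\tfrac{\|b\|^{2}}{2\sigma^{2}}\tau\}$ depending only on the exit time. Under $\mathbb{Q}$ the process $X^{x}-x$ is a centred, rotationally invariant Brownian motion, so the joint law of $(\tau,X_{\tau}^{x}-x)$ is invariant under every rotation about $x$: the ball and $\tau$ are rotation invariant, while the exit point is rotated. Averaging over the orthogonal group with Haar measure therefore turns any integrand $g(X_{\tau}^{x}-x)\,\phi(\tau)$ into $\phi(\tau)\int_{\partial B_{r}(x)}g(y-x)\,\mu_{r}(dy)$, using the uniformity (\ref{disuniB}); this is the decoupling step. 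Applying it with $g(y-x)=u(y)\exp\{\sigma^{-2}b\cdot(y-x)\}$ and $\phi(\tau)=\exp\{-\tfrac{\|b\|^{2}}{2\sigma^{2}}\tau\}$ factors the expectation into $\int_{\partial B_{r}(x)}u(y)\exp\{\sigma^{-2}b\cdot(y-x)\}\mu_{r}(dy)$ times $\mathbb{E}_{\mathbb{Q}}[\exp\{-\tfrac{\|b\|^{2}}{2\sigma^{2}}\tau\}]$. For the second factor, under $\mathbb{Q}$ one has $\tau=\inf\{t:\|\tilde{W}_{t}\|=r/\sigma\}$, the hitting time of level $r/\sigma$ by the Bessel process of $\tilde{W}$; by (\ref{transLB}) with $\lambda=\|b\|^{2}/(2\sigma^{2})$ this equals $\kappa((r/\sigma)\sqrt{2\lambda})=\kappa(r\|b\|/\sigma^{2})$. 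Combining the two factors gives the displayed identity, and hence the proposition.

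The step I expect to be most delicate is the decoupling of exit time and exit position, namely justifying that $\exp\{-\tfrac{\|b\|^{2}}{2\sigma^{2}}\tau\}$ can be pulled out of the spherical average; the rotational-invariance and Haar-averaging argument above is the clean way to obtain this without invoking the full skew-product representation of Brownian motion. The only other point needing care is the validity of the change of measure at the unbounded stopping time $\tau$, which is handled by the boundedness of the stopped density noted above.
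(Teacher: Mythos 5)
Your argument is essentially the paper's own proof: the same Girsanov change of measure removing the drift, the same reduction of $\tau^{X}(B_{r}(x))$ to the exit time of $\tilde{W}$ from $B_{r/\sigma}(0)$, and the same factorization into the uniform exit law (\ref{disuniB}) times the Laplace transform (\ref{transLB}), yielding the density $\kappa(r\|b\|/\sigma^{2})\exp\{\sigma^{-2}b\cdot(y-x)\}$ of the exit point with respect to $\mu_{r}$. The only differences are that you supply justifications the paper leaves implicit (uniform integrability of the stopped density for the optional stopping step, and the rotational-invariance/Haar-averaging proof of the independence of exit time and exit position, which the paper simply cites), so the proposal is correct and matches the paper's route.
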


\begin{proof}
We set
\begin{equation*}
Z_{t}=\exp \left\{ -\frac{1}{\sigma }b\cdot W_{t}-\frac{1}{2\sigma ^{2}}%
||b||^{2}t\right\} ,\ \ t\geq 0.
\end{equation*}
By Novikov condition (Corollary 5.13 in \cite{K-S}) the stochastic process $Z$ is a martingale. Then
Girsanov theorem (Theorem 5.1 in \cite{K-S}) implies the process $ \tilde{W}$, defined as,
\begin{equation*}
\tilde{W}_{t}=W_{t}+\frac{t}{\sigma }b,\ \ t\geq 0,
\end{equation*}
is a $d$-dimensional Brownian motion on $(\Omega ,\mathcal{F},\mathcal{%
\mathbb{\tilde{P}}})$, where the probability measure $\mathcal{\mathbb{%
\tilde{P}}}$ satisfies%
\begin{equation*}
\mathcal{\mathbb{\tilde{P}}}(A)\mathcal{\mathbb{=E}}[1_{A}Z_{T}],\ \ \forall
A\in \mathcal{F}_{T},\ \ 0\leq T<\infty .
\end{equation*}%
In particular 
\begin{equation}
\frac{d\mathbb{P}}{d\mathbb{\tilde{P}}}=\exp \left\{ \frac{1}{\sigma }b\cdot
W_{T}+\frac{1}{2\sigma ^{2}}||b||^{2}T\right\}, \text{ on }\mathcal{F}_{T}. \label{rnd}
\end{equation}
Observe that%
\begin{eqnarray}
\tau _{B_{r}(x)}^{X} &=&\inf \{t>0:||X_{t}^{x}-x||\geq r\}  \notag \\
&=&\inf \left\{ t>0:\left\Vert \frac{t}{\sigma }b+W_{t}\right\Vert \geq 
\frac{r}{\sigma }\right\}  \notag \\
&=&\inf \left\{ t>0:||\tilde{W}_{t}||\geq \frac{r}{\sigma }\right\} =\tau ^{%
\tilde{W}}(B_{r/\sigma }(0)).  \label{tpxmt}
\end{eqnarray}%
Let $h:\partial (B_{r}(x))\rightarrow \mathbb{R}$ be any bounded $\mathcal{B}%
(\partial (B_{r}(x)))$-$\mathcal{B}(\mathbb{R})$ measurable function, then (\ref{rnd}) brings about
\begin{align*}
&\mathcal{\mathbb{E}}\left[ h\left( X_{\tau ^{X}(B_{r}(x))}^{x}\right) \right]\\
&=\mathcal{\mathbb{\tilde{E}}}\left[ h\left( X_{\tau
^{X}(B_{r}(x))}^{x}\right) \left. \frac{d\mathbb{P}}{d\mathbb{\tilde{P}}}\right\vert _{\tau ^{X}(B_{r}(x))}\right] \\
&=\mathcal{\mathbb{\tilde{E}}}\left[ h\left( X_{\tau
^{X}(B_{r}(x))}^{x}\right) \exp \left\{ \frac{1}{\sigma }b\cdot W_{\tau ^{X}(B_{r}(x))}+\frac{||b||^{2}}{2\sigma ^{2}}\tau ^{X}(B_{r}(x))\right\} \right] \\
&=\mathcal{\mathbb{\tilde{E}}}\left[ h\left( x+\sigma \tilde{W}_{\tau ^{X}(B_{r}(x))}\right) \exp \left\{ \frac{1}{\sigma }b\cdot \tilde{W}_{\tau ^{X}(B_{r}(x))}-\frac{||b||^{2}}{2\sigma ^{2}}\tau ^{X}(B_{r}(x))\right\} \right] .
\end{align*}
From (\ref{tpxmt}), and using that $\tilde{W}_{\tau ^{\tilde{W}}(B_{r/\sigma
}(0))}$ and $\tau ^{\tilde{W}}(B_{r/\sigma }(0))$ are $\mathbb{\tilde{P}}$%
-independent we obtain
\begin{align*}
& \mathcal{\mathbb{E}}\left[ h\left( X_{\tau ^{X}(B_{r}(x))}^{x}\right) %
\right] \\
& =\mathcal{\mathbb{\tilde{E}}}\left[ h\left( x+\sigma \tilde{W}_{\tau ^{%
\tilde{W}}(B_{r/\sigma }(0))}\right) \exp \left\{ \frac{1}{\sigma }b\cdot 
\tilde{W}_{\tau ^{\tilde{W}}(B_{r/\sigma }(0))}-\frac{||b||^{2}}{2\sigma ^{2}%
}\tau ^{\tilde{W}}(B_{r/\sigma }(0))\right\} \right] \\
& =\mathcal{\mathbb{\tilde{E}}}\left[ h\left( x+\sigma \tilde{W}_{\tau ^{%
\tilde{W}}(B_{r/\sigma }(0))}\right) \exp \left\{ \frac{1}{\sigma }b\cdot 
\tilde{W}_{\tau ^{\tilde{W}}(B_{r/\sigma }(0))}\right\} \right] \\
& \times \mathcal{\mathbb{\tilde{E}}}\left[ \exp \left\{ -\frac{||b||^{2}}{%
2\sigma ^{2}}\tau ^{\tilde{W}}(B_{r/\sigma }(0))\right\} \right] .
\end{align*}%
The equality $\tau _{r/\sigma}=\tau ^{\tilde{W}}(B_{r/\sigma }(0))$ allows us to use (\ref{disuniB}). Then (\ref{transLB}) yields
\begin{align}
&\mathcal{\mathbb{E}}\left[ h\left( X_{\tau ^{X}(B_{r}(x))}^{x}\right) \right] \notag\\
&=\int_{\partial (B_{r/\sigma }(0))}h(x+\sigma y)\exp \left\{ \frac{1}{\sigma }b\cdot y\right\} \mathbb{\tilde{P}}\left( \tilde{W}_{\tau ^{\tilde{W}}(B_{r/\sigma }(0))}\in dy\right) \kappa \left( \frac{r||b||}{\sigma ^{2}}\right)  \notag \\
&=\int_{\partial (B_{r}(x))}h(y)\exp \left\{ \frac{1}{\sigma ^{2}}b\cdot
(y-x)\right\} \mu _{r}(dy)\kappa \left( \frac{r||b||}{\sigma ^{2}}\right) .
\label{extvalea}
\end{align}
This means $\mathbb{P}\left( X_{\tau ^{X}(B_{r}(x))}^{x}\in dy\right) $ is
absolutely continuous with respect to $\mu _{r}(dy)$ and its density given by%
\begin{equation*}
\frac{\mathbb{P}\left( X_{\tau ^{X}(B_{r}(x))}^{x}\in dy\right) }{\mu
_{r}(dy)}=\kappa \left( \frac{r||b||}{\sigma ^{2}}\right) \exp \left\{\frac{1}{\sigma ^{2}}b\cdot (y-x)\right\} .
\end{equation*}
This fact immediately implies the result.\hfill
\end{proof}

\begin{theorem}\label{TeopvmH}
A function $u:D\rightarrow \mathbb{R}$ has the mean value property in $D$ if and only
if it is harmonic in $D$ (see Definition \ref{defharmo}).
\end{theorem}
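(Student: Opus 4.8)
The plan is to prove both implications using the characterization in Proposition~\ref{pvmExp}, which converts the mean value property into the statement $\mathbb{E}[u(X^x_{\tau^X(B_r(x))})]=u(x)$ for all admissible $r$. The key bridge is that $A$ is the infinitesimal generator of $X$, so the average of $u$ over the exit distribution from a small ball, compared with $u(x)$, is governed at leading order by $Au(x)$.

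\textbf{Harmonic $\Rightarrow$ mean value property.} Suppose $u\in C^2$ with $Au=0$ in $D$. The plan is to apply It\^{o}'s formula to $u(X^x_t)$: since $dX^x_t=b\,dt+\sigma\,dW_t$, It\^o gives
\begin{equation*}
u(X^x_{t\wedge\tau})=u(x)+\int_0^{t\wedge\tau}Au(X^x_s)\,ds+\int_0^{t\wedge\tau}\sigma\nabla u(X^x_s)\cdot dW_s,
\end{equation*}
where $\tau=\tau^X(B_r(x))$. The drift integral vanishes because $Au=0$ on $B_r(x)\subset D$, so $u(X^x_{t\wedge\tau})$ is a local martingale; since $X$ stays in the bounded ball $\overline{B_r(x)}$ up to $\tau$ and $u$ is continuous there (hence bounded), it is a genuine bounded martingale, and optional stopping with $\tau<\infty$ a.s. yields $\mathbb{E}[u(X^x_\tau)]=u(x)$. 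By Proposition~\ref{pvmExp}, $u$ has the mean value property.

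\textbf{Mean value property $\Rightarrow$ harmonic.} Conversely, assume $u$ has the mean value property. By Lemma~\ref{pvmdif}, $u$ is automatically $C^\infty$, so $Au$ is defined and continuous; it remains to show $Au\equiv0$. The plan is to run It\^o's formula in reverse. Fix $x\in D$ and a small $r$; for each $r'<r$ the mean value property combined with Proposition~\ref{pvmExp} gives $\mathbb{E}[u(X^x_{\tau^X(B_{r'}(x))})]-u(x)=0$. Applying It\^o's formula as above (now legitimately, since $u\in C^2$) and taking expectations kills the stochastic integral, leaving
\begin{equation*}
0=\mathbb{E}\!\left[\int_0^{\tau^X(B_{r'}(x))}Au(X^x_s)\,ds\right]\quad\text{for every }r'<d(x,\partial D).
\end{equation*}
Dividing by $\mathbb{E}[\tau^X(B_{r'}(x))]$ and letting $r'\downarrow0$, the continuity of $Au$ forces the averaged integrand to converge to $Au(x)$, so $Au(x)=0$. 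Since $x\in D$ was arbitrary, $u$ is harmonic.

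\textbf{The main obstacle} I anticipate is the second implication: extracting $Au(x)=0$ pointwise from the vanishing of the expected integral requires a clean quantitative estimate showing $\mathbb{E}[\int_0^{\tau}Au(X^x_s)\,ds]/\mathbb{E}[\tau]\to Au(x)$ as $r'\downarrow0$. This hinges on controlling the exit time $\tau^X(B_{r'}(x))$ (which is finite with finite mean for the drifted Brownian motion, since it dominates the Bessel-process hitting time whose Laplace transform~\eqref{transLB} is known) and on the uniform continuity of $Au$ near $x$, so that $Au(X^x_s)$ stays within $\varepsilon$ of $Au(x)$ throughout the excursion for small $r'$. An alternative that sidesteps the exit-time bookkeeping is to differentiate the mean value identity~\eqref{defpvm} directly: expand both sides to second order in $r$ using the small-argument asymptotics of $I_{d/2-1}$ that fix $\kappa(z)\to1$, and match the $O(r^2)$ coefficients, which produces exactly $a\Delta u+b\cdot\nabla u$. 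Either route is routine once the It\^o representation is in place; the probabilistic argument is cleaner and I would present it first.
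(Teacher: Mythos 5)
Your proposal is correct and follows essentially the same route as the paper: both directions rest on It\^{o}'s formula applied to $u(X^x_{t\wedge\tau})$ together with Proposition~\ref{pvmExp} and the smoothness supplied by Lemma~\ref{pvmdif}. The only cosmetic difference is in the converse: the paper argues by contradiction (if $Au(x_0)>0$ then $Au>0$ on a small ball, making $\mathbb{E}[\int_0^\tau Au(X_s)\,ds]$ strictly positive), whereas you divide by $\mathbb{E}[\tau]$ and pass to the limit $r'\downarrow 0$ --- both hinge on the same continuity of $Au$ and the finiteness and positivity of the exit time.
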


\begin{proof}
Let us suppose $u$ has the mean value property in $D$, hence Lemma \ref%
{pvmdif} implies $u$ is $C^{\infty }$. Suppose $Au(x_{0})>0$ for some $%
x_{0}\in D$. The continuity of $Au$ implies there exists $r<d(x_{0},\partial D)$ such that $Au>0$ on $B_{r}(x_{0})$. Let us consider the
stochastic process $X=X^{x_{0}}$, defined at (\ref{procX}). By It\^{o}'s
formula (see Theorem 3.6 in \cite{K-S}) we obtain
\begin{equation}
u\left( X_{t\wedge \tau ^{X}(B_{r}(x_{0}))}\right) -u(X_{0})=\text{%
martingale }+\frac{1}{2}\int_{0}^{t\wedge \tau
^{X}(B_{r}(x_{0}))}Au(X_{s})ds.  \label{aplIto}
\end{equation}%
Now taking expectations and letting $t\rightarrow \infty $ we get%
\begin{equation*}
\mathbb{E}\left[ u\left( X_{\tau ^{X}(B_{r}(0))}\right) \right] -u(x_{0})=%
\frac{1}{2}\mathbb{E}\left[ \int_{0}^{\tau ^{X}(B_{r}(x_{0}))}Au(X_{s})ds%
\right] >0.
\end{equation*}%
By Proposition \ref{pvmExp} we have that the lef hand side, of the above equality, is $0$. This
contradiction implies $Au(x_{0})\leq 0$. If we suppose $Au(x_{0})<0$ and
proceeding as before we deduce $Au(x_{0})\geq 0$. In this way, $Au=0$ in $D$.

Reciprocally, assume $u$ is harmonic in $D$. Let $x_{0}\in D$ and $r<d(x_{0},\partial D)$. By It\^{o}'s formula $u\left( X_{t\wedge \tau
^{X}(B_{r}(x_{0}))}\right) -u(X_{0})$ is a martingale, this is due to the
second term in (\ref{aplIto}) is $0$, since $Au=0$ in $D$. If we take
expectation we get $u(x_{0})=\mathbb{E}\left[ u\left( X_{t\wedge \tau
^{X}(B_{r}(x_{0}))}\right) \right] $, and letting $t\rightarrow \infty $ turns out $u(x_{0})=\mathbb{E}\left[ u\left( X_{\tau ^{X}(B_{r}(x_{0}))}\right) %
\right] $. Therefore, by Proposition \ref{pvmExp}, the function $u$ has the
mean value property.\hfill
\end{proof}

\section{The Monte Carlo method\label{SecMteCarlo}}

Let $\varsigma \in (0,1]$ be fix. For every $x\in D \subset \mathbb{R}^{d}$, we define the sequence 
$(Y_{\varsigma }^{x}(n))_{n}$ as 
\begin{eqnarray}
Y_{\varsigma }^{x}(1) &=&x,  \notag \\
Y_{\varsigma }^{x}(n+1) &=&X_{\tau ^{X}\left( B_{r_{n}}(Y_{\varsigma
}^{x}(n))\right) }^{Y_{\varsigma }^{x}(n)},\ \ n\geq 1,  \label{defseq}
\end{eqnarray}%
where $r_{n}=\varsigma d(Y_{\varsigma }^{x}(n),\partial D)$. The state space of the random
variable $Y_{\varsigma }^{x}(n+1)$ is $\partial
B_{r_{n}}\left( Y_{\varsigma }^{x}(n)\right) $ and the strong Markov
property of $X$ implies%
\begin{equation}
\mathbb{P}\left( Y_{\varsigma }^{x}(n+1)\in dz|Y_{\varsigma
}^{x}(n)=y\right) =\kappa \left( \frac{r_{n}||b||}{\sigma ^{2}}\right) \exp
\left\{ \frac{r_{n}}{\sigma ^{2}}b\cdot \left( z-y\right) \right\} \mu
_{r_{n}}(dz),  \label{distrYn}
\end{equation}%
we hold this for each $n\in \mathbb{N}$.

\begin{lemma}
\label{ArmoMartinga} If $g:\overline{D}\rightarrow \mathbb{R}$ is a continuous function 
and has the mean value property in $D$, then for each $x\in D$ the sequence $(g(Y_{\varsigma }^{x}(n)))_{n}$ is a martingale with respect to $\mathcal{F}_{n}=\sigma (Y_{\varsigma
}^{x}(1),...,Y_{\varsigma }^{x}(n))$, which is the minimal $\sigma $-algebra
such that $Y_{\varsigma }^{x}(1),...,Y_{\varsigma }^{x}(n)$ are measurable.
\end{lemma}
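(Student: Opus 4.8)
The plan is to verify the martingale property directly from the definition, using the mean value property together with the transition kernel \eqref{distrYn} of the sequence $(Y_{\varsigma}^{x}(n))_{n}$. First I would note that each $Y_{\varsigma}^{x}(n)$ takes values in $\overline{D}$, so $g(Y_{\varsigma}^{x}(n))$ is well defined, and since $g$ is continuous on the compact set $\overline{D}$ it is bounded; hence each $g(Y_{\varsigma}^{x}(n))$ is integrable, which settles the required $L^{1}$ condition. The process $(g(Y_{\varsigma}^{x}(n)))_{n}$ is adapted to $(\mathcal{F}_{n})_{n}$ by construction, since $Y_{\varsigma}^{x}(n)$ is $\mathcal{F}_{n}$-measurable.

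The core step is to show $\mathbb{E}[g(Y_{\varsigma}^{x}(n+1))\mid\mathcal{F}_{n}]=g(Y_{\varsigma}^{x}(n))$ almost surely. The strong Markov property of $X$ implies that, conditionally on $\mathcal{F}_{n}$, the law of $Y_{\varsigma}^{x}(n+1)$ depends only on $Y_{\varsigma}^{x}(n)$, so it suffices to compute $\mathbb{E}[g(Y_{\varsigma}^{x}(n+1))\mid Y_{\varsigma}^{x}(n)=y]$. Using the transition probability \eqref{distrYn} this conditional expectation equals
\begin{equation*}
\kappa\left(\frac{r_{n}\lVert b\rVert}{\sigma^{2}}\right)\int_{\partial B_{r_{n}}(y)}g(z)\exp\left\{\frac{1}{\sigma^{2}}b\cdot(z-y)\right\}\mu_{r_{n}}(dz),
\end{equation*}
where $r_{n}=\varsigma\, d(y,\partial D)$. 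The key observation is that, because $\varsigma\in(0,1]$, we have $r_{n}=\varsigma\, d(y,\partial D)\le d(y,\partial D)$, so the ball $B_{r_{n}}(y)$ lies inside $D$ and $r_{n}<d(y,\partial D)$ is a legitimate radius for the mean value property. (Alternatively one appeals to Proposition \ref{pvmExp}, which expresses this same conditional expectation as $\mathbb{E}[g(X^{y}_{\tau^{X}(B_{r_{n}}(y))})]$ and equates it with $g(y)$.) Recalling that $\sigma^{2}=2a$, the displayed integral is exactly the right-hand side of the mean value identity \eqref{defpvm} applied at the point $y$ with radius $r_{n}$, and the mean value property of $g$ therefore gives that it equals $g(y)$.

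Combining these, $\mathbb{E}[g(Y_{\varsigma}^{x}(n+1))\mid Y_{\varsigma}^{x}(n)=y]=g(y)$, and since $Y_{\varsigma}^{x}(n)$ is $\mathcal{F}_{n}$-measurable this yields $\mathbb{E}[g(Y_{\varsigma}^{x}(n+1))\mid\mathcal{F}_{n}]=g(Y_{\varsigma}^{x}(n))$, completing the verification. The step I expect to require the most care is the justification that one may pass from conditioning on $\mathcal{F}_{n}$ to conditioning on $Y_{\varsigma}^{x}(n)$ alone, i.e. the clean application of the strong Markov property of $X$ restarted at the stopping time $\tau^{X}(B_{r_{n}}(Y_{\varsigma}^{x}(n)))$; everything else is a direct substitution of the transition kernel into the mean value identity. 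The boundary case $r_{n}=d(y,\partial D)$ (when $\varsigma=1$ and $y$ sits so that its ball touches $\partial D$) should be handled by noting that the mean value property is stated for all $r<d(y,\partial D)$ and invoking continuity of $g$ up to $\overline{D}$ if the extreme radius is ever attained.
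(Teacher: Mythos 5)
Your proof is correct and follows essentially the same route as the paper: integrability from continuity of $g$ on the compact set $\overline{D}$, reduction via the strong Markov property to conditioning on $Y_{\varsigma}^{x}(n)$ alone, and identification of the resulting expectation with $g(y)$ through the mean value property (the paper cites Proposition \ref{pvmExp} directly where you substitute the explicit kernel \eqref{distrYn}, but these are the same computation). Your remark about the boundary radius $r_{n}=d(y,\partial D)$ when $\varsigma=1$ is a legitimate point that the paper's proof passes over silently, and your continuity fix handles it.
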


\begin{proof}
First observe that, for each $n\in \mathbb{N}$, $Y_{\varsigma }^{x}(n+1)\in \overline{D}$, this is because $Y_{\varsigma }^{x}(n+1)\in \partial B_{r_{n}}\left( Y_{\varsigma}^{x}(n)\right) $. Since $\overline{D}$ is compact and $g$ continuous in $\overline{D}$, then $(g(Y_{\varsigma
}^{x}(n)))_{n}$ is an integrable sequence of random variables. Using the
strong Markov property of $X$ (see Proposition 2.6.6 in \cite{K-S}) we
obtain 
\begin{eqnarray*}
\mathbb{E}\left[ g(Y_{\varsigma }^{x}(n+1))|\mathcal{F}_{n}\right] &=&%
\mathbb{E}\left[ g(Y_{\varsigma }^{x}(n+1))|Y_{\varsigma }^{x}(n)\right] \\
&=&\left. \mathbb{E}\left[ g\left( X_{\tau ^{X}\left( B_{r_{n}}(y)\right)
}^{y}\right) \right] \right\vert _{y=Y_{\varsigma }^{x}(n)} \\
&=&\left. g(y)\right\vert _{y=Y_{\varsigma }^{x}(n)}=g(Y_{\varsigma
}^{x}(n)).
\end{eqnarray*}%
To obtain the third equality we have used Proposition \ref{pvmExp}.\hfill
\end{proof}

\begin{lemma}
\label{Lcs} For each $x\in D$, the sequence $(Y_{\varsigma }^{x}(n))_{n}$ converges a.s. to a
point $Y_{\varsigma }^{x}(\infty )\in \partial D$.
\end{lemma}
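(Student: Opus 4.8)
The plan is to manufacture enough bounded martingales of the form $g(Y_{\varsigma}^{x}(n))$, via Lemma \ref{ArmoMartinga}, to force convergence of the whole sequence, and then to read off from the geometry of the construction that the limit must lie on $\partial D$. The one genuinely creative point is that, since $b\neq 0$, the coordinate functions are no longer harmonic, so I must replace them by suitable exponential harmonics. For $c\in\mathbb{R}^{d}$ put $g_{c}(x)=\exp\{c\cdot x\}$; then $\nabla g_{c}=c\,g_{c}$ and $\Delta g_{c}=\|c\|^{2}g_{c}$, whence $Ag_{c}=(a\|c\|^{2}+b\cdot c)\,g_{c}$. Thus $g_{c}$ is harmonic in the sense of Definition \ref{defharmo} exactly when $a\|c\|^{2}+b\cdot c=0$, i.e. $\|c+b/(2a)\|^{2}=\|b\|^{2}/(4a^{2})$. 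This solution set is a sphere through the origin (center $-b/(2a)$, radius $\|b\|/(2a)>0$) whose points span $\mathbb{R}^{d}$, so I can select $c_{1},\dots,c_{d}$ among its solutions forming a basis of $\mathbb{R}^{d}$ (for $d=1$ the single nonzero solution $-b/a$ suffices). For each such $c_{j}$, Theorem \ref{TeopvmH} gives that $g_{c_{j}}$ has the mean value property, and since $g_{c_{j}}$ is continuous on $\overline{D}$, Lemma \ref{ArmoMartinga} shows that $(g_{c_{j}}(Y_{\varsigma}^{x}(n)))_{n}$ is a martingale with respect to $(\mathcal{F}_{n})_{n}$.

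Next I would apply martingale convergence. As noted in the proof of Lemma \ref{ArmoMartinga}, every $Y_{\varsigma}^{x}(n)$ lies in the compact set $\overline{D}$, so each value $c_{j}\cdot Y_{\varsigma}^{x}(n)$ ranges in a fixed bounded interval and hence $g_{c_{j}}(Y_{\varsigma}^{x}(n))$ is a martingale bounded both above and away from $0$. By the Convergence Theorem for discrete martingales it converges a.s. to a strictly positive limit, so taking logarithms $c_{j}\cdot Y_{\varsigma}^{x}(n)$ converges a.s. for every $j$. Writing $Tz=(c_{1}\cdot z,\dots,c_{d}\cdot z)$, the map $T$ is an invertible linear transformation of $\mathbb{R}^{d}$ because the $c_{j}$ form a basis; therefore $TY_{\varsigma}^{x}(n)$ converges a.s., and consequently $Y_{\varsigma}^{x}(n)=T^{-1}(TY_{\varsigma}^{x}(n))$ converges a.s. to some $Y_{\varsigma}^{x}(\infty)\in\overline{D}$.

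Finally I would locate the limit on the boundary. By construction $Y_{\varsigma}^{x}(n+1)\in\partial B_{r_{n}}(Y_{\varsigma}^{x}(n))$ with $r_{n}=\varsigma\,d(Y_{\varsigma}^{x}(n),\partial D)$, so $\|Y_{\varsigma}^{x}(n+1)-Y_{\varsigma}^{x}(n)\|=\varsigma\,d(Y_{\varsigma}^{x}(n),\partial D)$ exactly. Convergence of the sequence forces the left-hand side to tend to $0$, and since $z\mapsto d(z,\partial D)$ is continuous (indeed $1$-Lipschitz), passing to the limit yields $\varsigma\,d(Y_{\varsigma}^{x}(\infty),\partial D)=0$; as $\varsigma>0$ and $D$ is open this gives $Y_{\varsigma}^{x}(\infty)\in\partial D$ a.s.

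The main obstacle is the construction in the first paragraph: recognizing that harmonicity for $A$ is supplied by the exponentials $g_{c}$ and checking that the quadric $a\|c\|^{2}+b\cdot c=0$ carries a basis of $\mathbb{R}^{d}$, which is precisely what lets finitely many of these martingales pin down every coordinate of the limit. Once these martingales are available, the a.s. convergence of the sequence and the identification of its limit with a boundary point are routine.
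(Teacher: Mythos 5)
Your proof is correct and follows essentially the same route as the paper: the paper likewise manufactures $d$ bounded harmonic exponentials (it uses the coordinate-wise functions $h_{j}(x)=\exp\{-2b_{j}x_{j}/\sigma^{2}\}$, or $x_{j}$ when $b_{j}=0$, rather than a basis $c_{1},\dots,c_{d}$ drawn from the quadric $a\|c\|^{2}+b\cdot c=0$), applies Lemma \ref{ArmoMartinga} and bounded martingale convergence to each, inverts to recover a.s.\ convergence of $Y_{\varsigma}^{x}(n)$ itself, and finishes with the same $\|Y_{\varsigma}^{x}(n+1)-Y_{\varsigma}^{x}(n)\|=\varsigma\, d(Y_{\varsigma}^{x}(n),\partial D)\rightarrow 0$ argument. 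The only difference is your choice of harmonic test functions, which treats all coordinates uniformly under the paper's standing assumption $b\neq 0$, whereas the paper's coordinate-wise choice also covers components with $b_{j}=0$ without any case split on $b$ as a whole.
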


\begin{proof}
Let $h_{j}:\overline{D}\rightarrow \mathbb{R}$ be defined as%
\begin{equation*}
h_{j}(x_{1},...,x_{d})=\left\{ 
\begin{array}{ll}
\exp \left\{ -\frac{2b_{j}}{\sigma ^{2}}x_{j}\right\} , & b_{j}\neq 0, \\ 
x_{j}, & b_{j}=0,%
\end{array}%
\right.
\end{equation*}%
for each $j\in \{1,...,d\}$. A direct calculation shows
\begin{equation}
Ah_{j}(x)=0,\ \ x\in D,  \label{Aaplh}
\end{equation}%
for each $j\in \{1,...,d\}$. Then Theorem \ref{TeopvmH} implies $h_{j}$\ has
the mean value property, hence $(h_{j}(Y_{\varsigma }^{x}(n)))_{n}$ is a
martingale, by Lemma \ref{ArmoMartinga}. Since $D$ is bounded then $(h_{j}(Y_{\varsigma }^{x}(n)))_{n}$ is a bounded martingale, therefore the Convergence Theorem for martingales implies $\lim_{n\rightarrow \infty}h_{j}(Y_{\varsigma }^{x}(n))=H_{\varsigma }^{x,j}(\infty )$ a.s. If $b_{j}\neq 0$ from the boundedness of $D$ we deduce $H_{\varsigma }^{x,j}(\infty )>0$
a.s. Then $(Y_{\varsigma }^{x}(n))_{n}$ converges a.s. to $Y_{\varsigma
}^{x}(\infty )=\left( Y_{\varsigma }^{x,1}(\infty ),...,Y_{\varsigma
}^{x,d}(\infty )\right) $, where
\begin{equation*}
Y_{\varsigma }^{x,j}(\infty )=\left\{ 
\begin{array}{ll}
-\cfrac{\sigma ^{2}}{2b_{j}}\log \left( H_{r}^{x,j}(\infty )\right) , & 
b_{j}\neq 0, \\ 
H_{\varsigma }^{x,j}(\infty ), & b_{j}=0,
\end{array}%
\right.
\end{equation*}%
for each $j\in \{1,...,d\}$.

On the other hand, since $Y_{\varsigma }^{x}(n+1)\in \partial B_{r_{n}}\left(
Y_{\varsigma }^{x}(n)\right) $ then
\begin{equation*}
||Y_{\varsigma }^{x}(n+1)-Y_{\varsigma }^{x}(n)||=\varsigma d(Y_{\varsigma
}^{x}(n),\partial D), \ \  \forall n\in \mathbb{N}.
\end{equation*}
Letting $n\rightarrow \infty $ we have%
\begin{equation*}
\varsigma d(Y_{\varsigma }^{x}(\infty ),\partial D)=||Y_{\varsigma
}^{x}(\infty )-Y_{\varsigma }^{x}(\infty )||=0,
\end{equation*}%
therefore, $Y_{\varsigma }^{x}(\infty )\in \partial D$, this is because $\partial D$
is a closed set.\hfill
\end{proof}

\begin{theorem}[Existence]\label{MainTh} 
Let $D\subset \mathbb{R}^{d}$ be an open bounded set and $f:\partial D\rightarrow \mathbb{R}$ be a continuous function. If every point in $\partial V$ is regular, then the function $u:\overline{D}\rightarrow 
\mathbb{R}$ defined as 
\begin{equation}
u(x)=\left\{ 
\begin{tabular}{ll}
$f(x),$ & $x\in \partial D,$ \\ 
$\mathbb{E}\left[ f(Y_{\varsigma }^{x}(\infty ))\right] ,$ & $x\in D,$%
\end{tabular}%
\ \right.  \label{defh}
\end{equation}%
is continuous on $\overline{D}$ and satisfies the partial differential equation 
\begin{equation}
\frac{\sigma ^{2}}{2}\Delta u(x)+b\cdot \nabla u(x)=0,\ \ \forall x\in D.
\label{EDP2}
\end{equation}
\end{theorem}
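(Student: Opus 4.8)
The plan is to prove that the function $u$ of (\ref{defh}) is harmonic in $D$ and continuous on $\overline{D}$; since equation (\ref{EDP2}) is precisely the statement $Au=0$ of Definition \ref{defharmo}, harmonicity will give (\ref{EDP2}). Because $f$ is continuous on the compact set $\partial D$ it is bounded, so $u$ is bounded and in particular locally integrable. Thus, by Theorem \ref{TeopvmH} and Proposition \ref{pvmExp}, it will be enough to establish the single identity
\begin{equation*}
\mathbb{E}\left[u\left(X_{\tau^X(B_r(x))}^x\right)\right]=u(x),\qquad x\in D,\ \ r<d(x,\partial D);
\end{equation*}
harmonicity, and then continuity of $u$ on $D$ via Lemma \ref{pvmdif}, follow from it.

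The main step is to identify the walk-on-spheres limit with the exit point of the diffusion. Realizing the successive spheres along one trajectory of $X^x$ (by the strong Markov property this reproduces the law of the chain (\ref{defseq}), so it does not alter $u(x)=\mathbb{E}[f(Y_\varsigma^x(\infty))]$), let $\tau_D=\inf\{t\ge 0:X_t^x\notin D\}$ and let $\tau_n$ be the instant at which $X^x$ leaves the $n$-th sphere, so that $Y_\varsigma^x(n+1)=X_{\tau_n}^x$ and $(\tau_n)_n$ is nondecreasing. Writing $\tau_\infty=\lim_n\tau_n$, I would combine the path-continuity of $X$ with Lemma \ref{Lcs} to obtain $X_{\tau_\infty}^x=\lim_n X_{\tau_n}^x=Y_\varsigma^x(\infty)\in\partial D$. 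Since for every $\varsigma\in(0,1]$ the open ball $B_{r_n}(Y_\varsigma^x(n))$ is contained in $D$, the path satisfies $X_t^x\in D$ for all $t<\tau_\infty$, whence $\tau_D\ge\tau_\infty$; and $X_{\tau_\infty}^x\in\partial D$ forces $\tau_D\le\tau_\infty$. Therefore $\tau_\infty=\tau_D$ and $Y_\varsigma^x(\infty)=X_{\tau_D}^x$ almost surely, so that $u(x)=\mathbb{E}[f(X_{\tau_D}^x)]$ does not depend on $\varsigma$, which is the point emphasized in the introduction.

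Granting this representation, the mean value identity is a direct consequence of the strong Markov property of $X$. Fix $x\in D$ and $r<d(x,\partial D)$; then $\overline{B_r(x)}\subset D$, hence $\tau^X(B_r(x))<\tau_D$ and $Z:=X_{\tau^X(B_r(x))}^x\in\partial B_r(x)\subset D$. Restarting $X$ afresh at $Z$ via the strong Markov property at the stopping time $\tau^X(B_r(x))$, the process goes on to leave $D$ at $X_{\tau_D}^x$, so that
\begin{equation*}
\mathbb{E}\left[f(X_{\tau_D}^x)\mid\mathcal{F}_{\tau^X(B_r(x))}\right]=\mathbb{E}\left[f(X_{\tau_D}^z)\right]\big|_{z=Z}=u(Z).
\end{equation*}
Taking expectations yields $u(x)=\mathbb{E}[u(X_{\tau^X(B_r(x))}^x)]$, the required identity; Proposition \ref{pvmExp} then gives the mean value property, Theorem \ref{TeopvmH} gives (\ref{EDP2}), and Lemma \ref{pvmdif} gives that $u$ is $C^\infty$, in particular continuous, on $D$.

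Finally, I would establish continuity on $\overline{D}$ by checking it at each $v\in\partial D$ (interior continuity being already in hand). If $x_k\to v$ with $x_k\in D$, then $u(x_k)=\mathbb{E}[f(Y_\varsigma^{x_k}(\infty))]\to f(v)$ exactly because $v$ is a regular point (Definition \ref{regpoint}); if instead $x_k\in\partial D$, then $u(x_k)=f(x_k)\to f(v)$ by continuity of $f$. In both cases $u(x_k)\to f(v)=u(v)$, so $u$ is continuous on $\overline{D}$. I expect the only real obstacle to lie in the second paragraph: making rigorous that the sphere-exit times $\tau_n$ increase to the diffusion exit time $\tau_D$ itself, rather than to some strictly earlier time, which is where the containment $B_{r_n}(Y_\varsigma^x(n))\subset D$ and the almost-sure convergence of Lemma \ref{Lcs} are indispensable; once $Y_\varsigma^x(\infty)=X_{\tau_D}^x$ is secured, everything else is routine.
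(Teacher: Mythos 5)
Your argument is correct in outline, but it follows a genuinely different route from the paper's. You identify the walk-on-spheres limit with the exit point of the diffusion, proving $Y_{\varsigma}^{x}(\infty)=X_{\tau_{D}}^{x}$ in law by realizing the chain (\ref{defseq}) along a single trajectory, and then obtain the mean value identity from one application of the strong Markov property at $\tau^{X}(B_{r}(x))$; this is essentially the classical Kakutani--Muller representation $u(x)=\mathbb{E}[f(X^{x}_{\tau_{D}})]$, and it buys you the $\varsigma$-independence of (\ref{defh}) for free together with a transparent probabilistic formula for $u$. The paper deliberately never introduces $\tau_{D}$: it proves $\varsigma$-independence by convolving a Tietze extension $\tilde f$ with an $A$-harmonic mollifier and passing to the limit through the martingales of Lemma \ref{ArmoMartinga}, and it then extracts the mean value property by tuning $\varsigma=r/d(x,\partial D)$ as in (\ref{der}) so that $Y_{\varsigma}^{x}(2)$ lands exactly on $\partial B_{r}(x)$ with the explicit law (\ref{distrYn}); everything stays at the level of the discrete chain. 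The price of your route is that the time identification must be made rigorous: to write $X^{x}_{\tau_{\infty}}$ at all you need $\tau_{\infty}<\infty$ a.s.\ (which follows since $\tau_{\infty}\leq\tau_{D}$ and the drifted Brownian motion leaves the bounded set $D$ in finite time, or from transience when $b\neq 0$), you need the strong Markov property applied iteratively to justify that the single-trajectory chain has the same law as (\ref{defseq}) jointly in $n$ so that Lemma \ref{Lcs} transfers, and you need measurability of $z\mapsto\mathbb{E}[f(X^{z}_{\tau_{D}})]$ for the conditioning step and for the local integrability hypothesis of Proposition \ref{pvmExp}. These are standard but should be said; with them supplied, your proof is complete, and the boundary-continuity step via Definition \ref{regpoint} coincides with the paper's.
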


\begin{proof}
By $D_{r}$ we mean the set $\{x\in \mathbb{R}^{d}: d(x,D)<r\}$, $r>0$. The
Tietze-Urysohn theorem implies there exists a continuous function $%
\tilde{f}:D_{2}\rightarrow \mathbb{R}$ such that $\tilde{f}|_{\partial D}=f$%
. Hence, $\tilde{f}$ is bounded in $\overline{D_{1}}\subset D_{2}$, so $%
\tilde{f}\in L^{1}(D_{1})$. Let us take the function $h:\mathbb{R}%
^{d}\rightarrow \mathbb{R}$ defined as 
\begin{equation*}
h(x)=\left\{ 
\begin{array}{ll}
c\exp \left\{ -\frac{2}{\sigma ^{2}}b\cdot x\right\} , & x\in \overline{D_{1}%
}, \\ 
0, & x\notin \overline{D_{1}},%
\end{array}%
\right.
\end{equation*}%
where 
$$c^{-1}=\int_{\overline{D_{1}}} \exp\{-2\sigma ^{-2}(b\cdot x)\}dx.$$ If $\varepsilon >0$ we set%
\begin{equation*}
h_{\varepsilon }(x)=\frac{1}{\varepsilon ^{d}}h\left( \frac{1}{\varepsilon }%
x\right) ,\ \ x\in \mathbb{R}^{d}\text{.}
\end{equation*}%
Using that $A\left( \tilde{f}\ast h_{\varepsilon }\right) =\tilde{f}\ast A\left(
h_{\varepsilon }\right) $ and (\ref{Aaplh}) we conclude, by Theorem \ref{TeopvmH}, that the sequence $(\tilde{f}\ast h_{\varepsilon })_{\varepsilon >0}$ has the mean value property in $D_{1}$. Moreover, (see Theorem 8.14 in \cite{Folland}) 
\begin{equation*}
\lim_{\varepsilon \downarrow 0}(\tilde{f}\ast h_{\varepsilon })(x)=\tilde{f}%
(x)\text{, \ uniformly in }\overline{D}\subset D_{1}.
\end{equation*}%
Lemma \ref{ArmoMartinga} and the Dominated Convergence Theorem yield, for each $x\in D$, 
\begin{eqnarray*}
\mathbb{E}\left[ f(Y_{\varsigma }^{x}(\infty ))\right] &=&\mathbb{E}\left[ 
\tilde{f}(Y_{\varsigma }^{x}(\infty ))\right] \\
&=&\lim_{\varepsilon \downarrow 0}\mathbb{E}\left[ (\tilde{f}\ast
h_{\varepsilon })(Y_{\varsigma }^{x}(\infty ))\right] \\
&=&\lim_{\varepsilon \downarrow 0}\lim_{n\rightarrow \infty }\mathbb{E}\left[
(\tilde{f}\ast h_{\varepsilon })(Y_{\varsigma }^{x}(n))\right] \\
&=&\lim_{\varepsilon \downarrow 0}\lim_{n\rightarrow \infty }(\tilde{f}\ast
h_{\varepsilon })(x)=\tilde{f}(x).
\end{eqnarray*}%
This means the definition (\ref{defh}) of $u$ does not depend on $\varsigma$, so $u$ is well defined.

Let $x\in D$ and $r<d(x,\partial D)$. Let us take 
\begin{equation}
\varsigma =\frac{r}{d(x,\partial D)}\leq 1.  \label{der}
\end{equation}%
The definition (\ref{defseq}) implies $Y_{\varsigma }^{x}(n+1)=Y_{\varsigma
}^{Y_{\varsigma }^{x}(2)}(n)$, for all $n\geq 2$. The strong Markov property of $X$ (see Section 5.4 in \cite{Ash}) implies
\begin{equation*}
\mathbb{E}\left[ \tilde{f}(Y_{\varsigma }^{x}(n+1))|Y_{\varsigma }^{x}(2)%
\right] =\left. \mathbb{E}\left[ \tilde{f}(Y_{\varsigma }^{y}(n))\right]
\right\vert _{y=Y_{\varsigma }^{x}(2)},\text{ \ }\forall n\geq 2.
\end{equation*}%
Letting $n\rightarrow\infty$ in the above equality we have, by the Dominated Convergence Theorem for conditional expectations, 
\begin{equation*}
\mathbb{E}\left[ \tilde{f}(Y_{\varsigma }^{x}(\infty ))|Y_{\varsigma }^{x}(2)%
\right] =\left. \mathbb{E}\left[ \tilde{f}(Y_{\varsigma }^{y}(\infty ))%
\right] \right\vert _{y=Y_{\varsigma }^{x}(2)}.
\end{equation*}%
From (\ref{der}) we see that $Y_{\varsigma }^{x}(2)$ has values in $\partial
B_{r}(x)$, then (\ref{distrYn}) turns out
\begin{eqnarray*}
\mathbb{E}[f(Y_{\varsigma }^{x}(\infty ))] &=&\mathbb{E}\left[ \mathbb{E}%
[f(Y_{\varsigma }^{x}(\infty ))|Y_{\varsigma }^{x}(2)]\right] \\
&=&\mathbb{E}\left[ \left. \mathbb{E}\left[ f(X_{\varsigma }^{z}(\infty ))%
\right] \right\vert _{z=Y_{\varsigma }^{x}(2)}\right] \\
&=&\kappa \left( \frac{r||b||}{\sigma ^{2}}\right) \int_{\partial
B_{r}\left( x\right) }\mathbb{E}[f(X_{\varsigma }^{z}(\infty ))]\exp \left\{ 
\frac{r}{\sigma ^{2}}b\cdot (z-x)\right\} \mu _{r}(dz).
\end{eqnarray*}

This means the function $x\mapsto \mathbb{E}[f(Y_{\varsigma }^{x}(\infty ))]$
has the mean value property in $D$, then by Theorem \ref{TeopvmH} we
have that $u$ is harmonic in $D$. The continuity of $u$ follows immediately from the Definition \ref{regpoint} of regular points.\hfill
\end{proof}

A sufficient condition to analyze the regularity of the boundary points of $D $ is given through the following condition.

\begin{definition}
Let $v\in \partial D$. A continuous function $q_{v}:\overline{D}\rightarrow 
\mathbb{R}$ is called a barrier at $v$ if $q_{v}$ is harmonic in $D$, $%
q_{v}(v)=0$ and 
\begin{equation}
q_{v}(x)>0,\ \ \forall x\in \overline{D}\backslash \{v\}.  \label{limsp}
\end{equation}
\end{definition}

\begin{proposition}
Let $v\in \partial D$ be a point with a barrier $q_{v}$, then it is regular.
\end{proposition}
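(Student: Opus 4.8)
The plan is to use the barrier $q_v$ as a quantitative control on how much of the harmonic measure of $x$ can sit far away from $v$, and then to combine this control with the continuity of $f$ at $v$.

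First I would observe that $q_v$, being harmonic in $D$ and continuous on $\overline{D}$, has the mean value property in $D$ by Theorem \ref{TeopvmH}; hence, by Lemma \ref{ArmoMartinga}, the sequence $(q_v(Y_\varsigma^x(n)))_n$ is a martingale for every $x\in D$. Since $\overline{D}$ is compact and $q_v$ continuous, this martingale is bounded, so it is uniformly integrable and, using the a.s.\ convergence $Y_\varsigma^x(n)\to Y_\varsigma^x(\infty)$ of Lemma \ref{Lcs} together with the continuity of $q_v$, it converges in $L^1$ to $q_v(Y_\varsigma^x(\infty))$. Because the martingale has constant mean $q_v(Y_\varsigma^x(1))=q_v(x)$, this yields
\[
\mathbb{E}\left[q_v(Y_\varsigma^x(\infty))\right]=q_v(x),\qquad x\in D.
\]
As $Y_\varsigma^x(\infty)\in\partial D\subset\overline{D}$ a.s.\ and $q_v\ge 0$ on $\overline{D}$, the integrand is nonnegative, and $q_v(x)\to q_v(v)=0$ as $x\to v$ by continuity.

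Next I would fix $\varepsilon>0$ and, by continuity of $f$ at $v$, pick $\delta>0$ with $|f(y)-f(v)|<\varepsilon$ for all $y\in\partial D$ with $||y-v||<\delta$. The set $K_\delta=\{y\in\overline{D}:||y-v||\ge\delta\}$ is compact and avoids $v$, so the defining property $q_v>0$ on $\overline{D}\setminus\{v\}$ gives $m:=\min_{y\in K_\delta}q_v(y)>0$. Markov's inequality, together with the identity above, then bounds the probability of landing far from $v$:
\[
\mathbb{P}\left(||Y_\varsigma^x(\infty)-v||\ge\delta\right)\le \mathbb{P}\left(q_v(Y_\varsigma^x(\infty))\ge m\right)\le \frac{q_v(x)}{m},
\]
which tends to $0$ as $x\to v$. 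Writing $M=\sup_{y\in\partial D}|f(y)|<\infty$ and splitting the expectation over $\{||Y_\varsigma^x(\infty)-v||<\delta\}$ and its complement, I would obtain
\[
\left|\mathbb{E}\left[f(Y_\varsigma^x(\infty))\right]-f(v)\right|\le \varepsilon+2M\,\frac{q_v(x)}{m}.
\]
Letting $x\to v$ gives $\limsup_{x\to v}\left|\mathbb{E}[f(Y_\varsigma^x(\infty))]-f(v)\right|\le\varepsilon$, and since $\varepsilon>0$ is arbitrary the limit is $f(v)$, so $v$ is regular in the sense of Definition \ref{regpoint}.

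The routine estimates are negligible here; the two steps that actually carry the argument are the passage from $\mathbb{E}[q_v(Y_\varsigma^x(n))]=q_v(x)$ to the limiting identity $\mathbb{E}[q_v(Y_\varsigma^x(\infty))]=q_v(x)$, which relies on the boundedness (hence uniform integrability) of the martingale, and the extraction of the strictly positive lower bound $m$, which is precisely where the strict positivity of the barrier away from $v$ combined with compactness of $\overline{D}$ is indispensable. I expect the latter to be the conceptual crux, since it is what converts the single scalar estimate $\mathbb{E}[q_v(Y_\varsigma^x(\infty))]\to 0$ into a genuine concentration of $Y_\varsigma^x(\infty)$ near $v$.
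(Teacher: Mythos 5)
Your proof is correct and follows essentially the same route as the paper: both arguments rest on the martingale identity $\mathbb{E}[q_v(Y_\varsigma^x(\infty))]=q_v(x)$ obtained from Lemma \ref{ArmoMartinga} together with the strictly positive infimum of $q_v$ on $\{z\in\overline{D}:\|z-v\|\ge\delta\}$. The only cosmetic difference is that the paper records the estimate as the pointwise bound $|f(y)-f(v)|\le\varepsilon+2MK^{-1}q_v(y)$ on $\partial D$ and then takes expectations, whereas you split the expectation via Markov's inequality; these are the same computation.
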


\begin{proof}
Let $M=\sup \{|f(x)|:x\in \partial D\}$. The continuity of $f$ in $\partial D$ implies that for each $\varepsilon >0$ there exists $\delta >0$ such that 
\begin{equation*}
x\in \partial D\text{, }||x-v||<\delta \Rightarrow |f(x)-f(v)|<\varepsilon .
\end{equation*}%
On the other hand, from (\ref{limsp}) we have 
\begin{equation*}
K=\inf \{q_{v}(z):||z-v||\geq \delta ,z\in \overline{D}\}>0,
\end{equation*}%
this allows us to get
\begin{equation*}
K^{-1}q_{v}(z)\geq 1,\ \ \forall z\in \overline{D},\ ||z-v||\geq \delta .
\end{equation*}%
Therefore%
\begin{equation*}
|f(x)-f(v)|\leq \varepsilon +(2MK^{-1})q_{v}(x)\text{, \ }\forall x\in
\partial D.
\end{equation*}%
Let $(v_{k})$ be an arbitrary sequence in $D$ such that $\lim_{k\rightarrow
\infty }v_{k}=v$. Define $Y_{\varsigma}^{v_{k}}(\infty )$ as we did in (\ref{defseq}%
). Lemma \ref{ArmoMartinga} implies,%
\begin{eqnarray*}
|f(v)-\mathbb{E}\left[ f(Y_{\varsigma }^{v_{n}}(\infty ))\right] | &=&|\mathbb{E}\left[ f(v) - f(Y_{\varsigma}^{v_{n}}(\infty ))\right] | \\
&\leq &\mathbb{E}\left[ |f(v)-f(Y_{\varsigma }^{v_{n}}(\infty ))|\right] \\
&\leq &\varepsilon +(2MK^{-1})\mathbb{E}\left[ q_{v}(Y_{\varsigma
}^{v_{n}}(\infty ))\right] \\
&=&\varepsilon +(2MK^{-1})q_{v}(v_{n}).
\end{eqnarray*}%
From the continuity of $q_{v}$ we get the desired result.\hfill
\end{proof}

We say that a point $v\in \partial D$ satisfies the Poincar\'{e}'s condition if
there exists a ball $B_{s}(u)\subset\mathbb{R}^{d}\backslash D$ such that $\overline{D}\cap \overline{B_{s}(u)}=\{v\}$. Next we will get an application of the barrier condition.

\begin{corollary}
If $a=1$ and $b=0$, then each point in $\partial D$, that satisfies the Poincar\'{e}'s condition, is a regular point.
\end{corollary}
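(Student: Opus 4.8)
The plan is to reduce the statement to the preceding Proposition: a boundary point that carries a barrier is automatically regular, so it suffices to construct a barrier $q_v$ at $v$. With $a=1$ and $b=0$ the operator $A$ is the ordinary Laplacian, so \emph{harmonic} here means $C^2$ with $\Delta q_v=0$, and the only raw material I need is the classical fundamental solution of $\Delta$.

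First I would extract the geometry encoded in Poincar\'e's condition. It provides a ball $B_s(u)\subset\mathbb{R}^d\setminus D$ with $\overline{D}\cap\overline{B_s(u)}=\{v\}$. Since $v\in\overline{D}$, the point $v$ cannot lie in the open ball $B_s(u)$ (otherwise a whole neighborhood of $v$ would avoid $D$), so $v\in\partial B_s(u)$ and $||v-u||=s$. In particular the pole $u$ lies at distance $s>0$ from $v$ and $u\notin\overline{D}$. The decisive separation estimate is that every $x\in\overline{D}$ satisfies $||x-u||\ge s$, with equality exactly when $x=v$; indeed any other $x\in\overline{D}$ with $||x-u||\le s$ would be a second point of $\overline{D}\cap\overline{B_s(u)}$.

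Next I would write the barrier as a shifted fundamental solution with pole at the exterior point $u$. For $d\ge 3$ put
\[
q_v(x)=\frac{1}{s^{d-2}}-\frac{1}{||x-u||^{d-2}},
\]
for $d=2$ put $q_v(x)=\log\!\big(||x-u||/s\big)$, and for $d=1$ the affine function $q_v(x)=||x-u||-s$ works. In each case the underlying function $x\mapsto||x-u||^{2-d}$ (respectively $\log||x-u||$, respectively $||x-u||$) is harmonic on $\mathbb{R}^d\setminus\{u\}$, a set that contains $\overline{D}$ because $u\notin\overline{D}$; hence $q_v$ is continuous on $\overline{D}$ and harmonic in $D$, with no singularity to worry about. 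The chosen normalization gives $q_v(v)=0$, and the separation estimate combined with the strict monotonicity of $t\mapsto t^{2-d}$ (respectively $\log t$, respectively $t$) on $(0,\infty)$ forces $q_v(x)>0$ for all $x\in\overline{D}\setminus\{v\}$. Thus $q_v$ is a barrier at $v$, and the Proposition yields that $v$ is regular.

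I do not expect a serious analytic obstacle, since this is the classical barrier construction specialized to the Laplacian. The two points that genuinely require care are purely geometric and both come from Poincar\'e's condition: that the pole $u$ lies strictly outside $\overline{D}$, which is what guarantees $q_v$ is harmonic throughout $D$ with no pole inside, and that $v$ is the unique nearest point of $\overline{D}$ to $u$, which is what makes the positivity of $q_v$ strict away from $v$. Handling the three dimensional ranges $d\ge 3$, $d=2$, $d=1$ separately is routine bookkeeping rather than a real difficulty.
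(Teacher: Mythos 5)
Your proposal is correct and takes essentially the same route as the paper: the paper's proof simply exhibits the barrier $q_v(x)=\log(||x-u||/s)$ for $d=2$ and $q_v(x)=s^{2-d}-||x-u||^{2-d}$ for $d\geq 3$ and invokes the preceding proposition. You supply the verification details (the separation estimate $||x-u||\geq s$ with equality only at $x=v$, and harmonicity away from the exterior pole $u$) that the paper leaves implicit, plus the $d=1$ case the paper omits.
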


\begin{proof}
Let $v\in \partial D$ for which there exists $B_{s}(u)\subset\mathbb{R}^{d}\backslash D$ such that $\overline{D%
}\cap \overline{B_{s}(u)}=\{v\}$, then $q_{v}:\overline{D}\rightarrow 
\mathbb{R}$, defined as, 
\begin{equation*}
q_{v}(x)=\left\{ 
\begin{array}{ll}
\log \left( \frac{||x-u||}{s}\right) , & d=2, \\ 
s^{2-d}-||x-u||^{2-d}, & d\geq 3,%
\end{array}%
\right.
\end{equation*}%
is a barrier at $v$.\hfill
\end{proof}

Now let us deal with the uniqueness of the Dirichlet problem, here we do not
need to assume any type of regularity on the boundary of $D$.

\begin{theorem}[Uniqueness]\label{ThUniqueness}
\label{MainTh copy(1)} Let $D\subset \mathbb{R}^{d}$ be an open bounded set
and $f:\partial D\rightarrow \mathbb{R}$ be a continuous function. There
exists at most one continuous function $u:\overline{D}\rightarrow \mathbb{R}$ such that $u|_{\partial D}=f$ and $u$ satisfies the partial differential equation (\ref{EDP2}) in $D$.
\end{theorem}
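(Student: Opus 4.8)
The plan is to show that every continuous solution $u$ on $\overline{D}$ must coincide with the explicit stochastic representation $x\mapsto\mathbb{E}[f(Y_\varsigma^x(\infty))]$, which is manifestly independent of the particular solution chosen; uniqueness then follows at once.

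First I would fix an arbitrary continuous $u:\overline{D}\to\mathbb{R}$ with $u|_{\partial D}=f$ satisfying (\ref{EDP2}) in $D$. Since (\ref{EDP2}) is exactly $Au=0$ and the formulation of the Dirichlet problem presupposes that $u$ possesses second partial derivatives on $D$ (so that $u\in C^{2}(D)$), the function $u$ is harmonic in the sense of Definition \ref{defharmo}. By Theorem \ref{TeopvmH}, $u$ then has the mean value property in $D$. Because $u$ is continuous on the compact set $\overline{D}$ and has the mean value property in $D$, Lemma \ref{ArmoMartinga} applies: for every $x\in D$ and every fixed $\varsigma\in(0,1]$, the sequence $(u(Y_\varsigma^{x}(n)))_{n}$ is a martingale with respect to $(\mathcal{F}_{n})_{n}$.

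Next I would pass to the limit. Continuity of $u$ on the compact $\overline{D}$ gives a bound $|u|\le M$ there, so the martingale $(u(Y_\varsigma^{x}(n)))_{n}$ is bounded, hence uniformly integrable. By Lemma \ref{Lcs}, $Y_\varsigma^{x}(n)\to Y_\varsigma^{x}(\infty)\in\partial D$ almost surely, so by continuity $u(Y_\varsigma^{x}(n))\to u(Y_\varsigma^{x}(\infty))$ a.s.\ and in $L^{1}$. Since the martingale property together with $Y_\varsigma^{x}(1)=x$ yields $\mathbb{E}[u(Y_\varsigma^{x}(n))]=u(x)$ for every $n$, letting $n\to\infty$ and using that $Y_\varsigma^{x}(\infty)\in\partial D$, where $u=f$, gives
\begin{equation*}
u(x)=\lim_{n\to\infty}\mathbb{E}[u(Y_\varsigma^{x}(n))]=\mathbb{E}[u(Y_\varsigma^{x}(\infty))]=\mathbb{E}[f(Y_\varsigma^{x}(\infty))],\qquad\forall x\in D.
\end{equation*}

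Finally I would conclude. The right-hand side depends only on $f$, on $D$, and on the law of the process $X$, but in no way on the chosen solution $u$. Hence if $u_{1}$ and $u_{2}$ are two continuous solutions, both equal $\mathbb{E}[f(Y_\varsigma^{x}(\infty))]$ on $D$ and both equal $f$ on $\partial D$, so $u_{1}=u_{2}$ on $\overline{D}$. I expect the only delicate point to be the interchange of limit and expectation: this is precisely where boundedness of $u$ on the compact set $\overline{D}$ (equivalently, uniform integrability of the bounded martingale) is needed, so that the martingale may be closed by $u(Y_\varsigma^{x}(\infty))$. Everything else is a direct assembly of Theorem \ref{TeopvmH}, Lemma \ref{ArmoMartinga}, and Lemma \ref{Lcs}.
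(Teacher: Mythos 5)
Your proposal is correct and follows essentially the same route as the paper: identify the solution with the stochastic representation $\mathbb{E}[f(Y_\varsigma^{x}(\infty))]$ by combining Theorem \ref{TeopvmH}, Lemma \ref{ArmoMartinga}, Lemma \ref{Lcs}, and dominated convergence for the bounded martingale. The only cosmetic difference is that the paper fixes $\varsigma=1$ and leaves the appeal to Theorem \ref{TeopvmH} implicit, whereas you make it explicit.
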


\begin{proof}
If $h:\overline{D}\rightarrow \mathbb{R}$ is a solution for the Dirichlet
problem, then $h$ is continuous in $\overline{D}$. For each $x\in D$ we have by Lemma \ref{Lcs}, 
\begin{equation*}
\lim_{n\rightarrow \infty }h(Y_{1}^{x}(n))=f(Y_{1}^{x}(\infty )),\text{ \
a.s.}
\end{equation*}%
Otherwise, Lemma \ref{ArmoMartinga} implies that $(h(Y_{1}^{x}(n)))_{n}$ is a
martingale and, by the Dominated Convergence Theorem, 
\begin{equation*}
h(x)=\mathbb{E}\left[ h(Y_{1}^{x}(1))\right] =\lim_{n\rightarrow \infty }%
\mathbb{E}\left[ h(Y_{1}^{x}(n))\right] =\mathbb{E}\left[ f(Y_{1}^{x}(\infty
))\right] .
\end{equation*}%
Therefore, $h(x)=E[f(Y_{1}^{x}(\infty ))]=u(x)$, for each $x\in D$. This identity
gives us the sought uniqueness of the Dirichlet problem.\hfill
\end{proof}

\begin{corollary}
If $D$ is a bounded open set and $u$ is harmonic in $D$ and continuous in $\overline{D}$, then 
\begin{equation}
\sup_{x\in \overline{D}}u(x)=\sup_{x\in \partial D}u(x).  \label{igsup}
\end{equation}
\end{corollary}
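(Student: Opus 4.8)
The plan is to deduce the maximum principle directly from the uniqueness theorem (Theorem \ref{ThUniqueness}) combined with the probabilistic representation $u(x)=\mathbb{E}[f(Y_{1}^{x}(\infty))]$ established there. First I would observe that the proof of Theorem \ref{ThUniqueness} actually shows more than uniqueness: for \emph{any} function $h$ that is harmonic in $D$ and continuous on $\overline{D}$, the representation
\begin{equation*}
u(x)=\mathbb{E}\left[ f\left( Y_{1}^{x}(\infty )\right) \right] ,\ \ x\in D,
\end{equation*}
holds with $f=u|_{\partial D}$, because the only ingredients used there are the continuity of $h$ on the compact set $\overline{D}$, the martingale property from Lemma \ref{ArmoMartinga} (which requires only that $h$ have the mean value property in $D$, equivalently that $h$ be harmonic by Theorem \ref{TeopvmH}), and the a.s.\ convergence $Y_{1}^{x}(n)\to Y_{1}^{x}(\infty)\in\partial D$ from Lemma \ref{Lcs}.

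With this representation in hand, the bound is immediate. For each $x\in D$,
\begin{equation*}
u(x)=\mathbb{E}\left[ u\left( Y_{1}^{x}(\infty )\right) \right] \leq \mathbb{E}\left[ \sup_{y\in \partial D}u(y)\right] =\sup_{y\in \partial D}u(y),
\end{equation*}
where the inequality uses that $Y_{1}^{x}(\infty)\in\partial D$ almost surely (Lemma \ref{Lcs}), so the integrand never exceeds the supremum of $u$ over $\partial D$, and the final equality holds because this supremum is a deterministic constant. Taking the supremum over $x\in D$ gives $\sup_{x\in D}u(x)\leq \sup_{y\in\partial D}u(y)$.

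It then remains to combine the interior and boundary suprema. Since $\overline{D}=D\cup\partial D$, we have
\begin{equation*}
\sup_{x\in \overline{D}}u(x)=\max\left\{ \sup_{x\in D}u(x),\ \sup_{x\in \partial D}u(x)\right\} =\sup_{x\in \partial D}u(x),
\end{equation*}
the last step using the inequality just obtained, which shows the boundary supremum dominates the interior one. The reverse inequality $\sup_{x\in\overline{D}}u(x)\geq\sup_{x\in\partial D}u(x)$ is trivial since $\partial D\subset\overline{D}$, giving (\ref{igsup}). I expect no genuine obstacle here: the only subtle point is justifying that the representation $u(x)=\mathbb{E}[u(Y_{1}^{x}(\infty))]$ applies to an arbitrary harmonic $u$ and not merely to the specific solution constructed in Theorem \ref{MainTh}, but this is exactly what the argument in the proof of Theorem \ref{ThUniqueness} delivers, since that argument never used existence of a barrier or regularity of boundary points.
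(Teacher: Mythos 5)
Your proposal is correct and follows essentially the same route as the paper: both apply the representation $u(x)=\mathbb{E}[u(Y_{1}^{x}(\infty))]$ extracted from the proof of Theorem \ref{ThUniqueness} and then bound the integrand by $\sup_{y\in\partial D}u(y)$ using that $Y_{1}^{x}(\infty)\in\partial D$ a.s. Your additional remarks spelling out why the representation holds for an arbitrary harmonic $u$ and how the interior and boundary suprema combine are details the paper leaves implicit, but the argument is the same.
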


\begin{proof}
Let $x\in D$, then the previous result implies
\begin{eqnarray*}
u(x) &=&\mathbb{E}\left[ f(Y_{1}^{x}(\infty ))\right] \\
&\leq &\mathbb{E}\left[ \sup_{y\in \partial D}f(y)\right] =\sup_{y\in
\partial D}f(y).
\end{eqnarray*}%
From this (\ref{igsup}) follow readily.\hfill
\end{proof}

\bigskip
\noindent {\it Acknowledgments}\\
The research was partially supported through the grant PIM 14-4 of the Universidad Aut\'onoma de Aguascalientes.

\end{document}